\definecolor{nb}{rgb}{.6, .176, 1}
\definecolor{sienna}{rgb}{1, 0, 0}
\definecolor{darkgreen}{rgb}{0, .5, 0}
 \newtheorem{theorem}{Theorem}[section]
 \newtheorem{corollary}[theorem]{Corollary}
 \newtheorem{lemma}[theorem]{Lemma}
 \newtheorem{example}[theorem]{Example}
 \newtheorem{remark}{Remark}
\newcommand{\hZ}{\mbox{$\hat{Z}$}}
\newcommand{\GG}{\mbox{${\mathcal G}$}}
\newcommand{\FF}{\mbox{${\mathcal F}$}}
\newcommand{\LL}{\mbox{${\mathcal L}$}}
\newcommand{\OO}{\mbox{${\mathcal O}$}}
\newcommand{\bD}{\mbox{${\mathbf D}$}}
\newcommand{\bT}{\mbox{${\mathbf T}$}}
\newcommand{\IR}{\mbox{${\mathbb R}$}}
\newcommand{\IZ}{\mbox{${\mathbb Z}$}}
\newcommand{\IE}{\mbox{${\mathbb E}$}}
\newcommand{\I}{\mbox{${\mathbb I}$}}
\newcommand{\bone}{\mathbf{1}}
\newcommand{\bz}{\mathbf{0}}
\newcommand{\bw}{\mathbf{w}}
\newcommand{\diin}{d_i^{\mathrm{in}}}
\newcommand{\djin}{d_j^{\mathrm{in}} }
\newcommand{\diout}{d_i^{\mathrm{out}}}
\newcommand{\tA}{\tilde{A}}
\newcommand{\Var}{\mathop{\mathrm{Var}}}
\begin{document}

 \title{{\bf Interacting Urns on a Finite Directed Graph}}
\author[*]{Gursharn Kaur} 
\author[**]{Neeraja Sahasrabudhe}

\affil[*]{National University of Singapore, Singapore}
\affil[**]{Indian Institute of Science Education and Research, Mohali, India}
\date{14 March, 2021}

	 \maketitle

\begin{abstract}
We introduce a general two colour interacting urn model on a finite directed graph, where each urn at a node,
 reinforces all the urns in its out-neighbours according to a fixed, non-negative and balanced  reinforcement matrix. 
 We show that the fraction of balls of either colour converges almost surely to a deterministic limit if either the reinforcement is not of P\'olya type or if the graph is such that every vertex with non-zero in-degree can be reached from some vertex with zero in-degree.  We also obtain joint central limit theorems, with appropriate scaling, around the vector of limiting proportion.  Further, in the remaining case when there are no vertices with zero in-degree and the reinforcement is of P\'olya type, we restrict our analysis to a regular graph and  show that the fraction of balls of either colour  converges almost surely to a finite random limit, which is the same across all the urns. 
\end{abstract}

\


\


\section{Introduction} \label{Sec:intro}
Systems with multiple components that evolve randomly through self-reinforcement or reinforcement via interactions with other components of the system have been of great interest for a long time. Interacting urn models are often used to analyse such systems. Recently, there has been a lot of activity in the area of interacting urns \cite{SAurns, Paolonetworkurns, Irene-Paolo2014, Paolo-Idda2013, graph, NS}. 
In simplest terms, an urn model or an urn process refers to a discrete time random process that involves updating the configuration of an urn consisting of balls of different colours, according to some reinforcement rule. Here by the configuration of an urn at time $t$, we mean a vector where the $i^{\mathrm{th}} $ element represents the number of balls of  colour $i$ in the urn at time $t$.  The reinforcement process is assumed to be Markovian, that is, the reinforcement at any time depends only on the present urn configuration. The most common way of self-reinforcement is to select a ball uniformly at random from the urn at time $t$ and then depending on the colour of the drawn ball,  a certain number of balls of some colours  are added or removed from the urn. Such an urn model can be fully described by the initial configuration of the  urn and the associated replacement matrix, whose $(i,j)^{\mathrm{th}}$ element is the number of balls of colour $j$ added  or removed from the urn when the ball drawn is of colour $i$. 

Traditionally, the study of urn models is classified based on the type of replacement matrix. For instance, the replacement matrix  for the classical {P}\'{o}lya urn model is the identity matrix. That is, at every time-step a ball is drawn and is replaced in the urn along with another ball of the same colour.  The asymptotic properties of the P\'olya urn model have been studied extensively \cite{Mahmoud2009}. The most well-known result for the {P}\'{o}lya urn model is that the fraction of balls of each colour converges  almost surely to a  random limit, which is distributed according to beta distribution with parameters depending on the initial configuration of the urn.  An immediate generalisation of the two-colour {P}\'{o}lya urn model  is the Friedman urn model \cite{Friedman}, where the chosen ball is replaced with $\alpha\geq 0$ balls of the same colour and $\beta >0 $ balls of the other colour. In this case, the fraction of balls of either colour approaches the deterministic limit $1/2$ with probability $1$. Several other generalisations of the P\'olya urn model have been studied and we refer the reader to \cite{Mahmoud2009} for details.

\subsection{General two-colour interacting urn model} \label{sec1.1}

In this paper, we study urn processes involving more than one urn.
We define a \textit{general two-colour interacting urn model}  as a random process involving $N$ urns such that, the reinforcement in each urn depends on all the urns or on a non-trivial subset of the given set of $N$ urns. 
More precisely, suppose there are $N$ urns with configurations $(W^t_i, B^t_i) $, where $W^t_i$ and $B^t_i$ denote the number of white balls and black balls respectively, at time $t\geq 0$ in the $i^{\mathrm{th}}$ urn, for every $i\in [N] \coloneqq \{1, \ldots, N \}$.  Let $Z_i^t = \dfrac{W_i^t}{W_i^t+B_i^t}$ be the proportion of white balls in the $i^{\mathrm{th}}$ urn  at time $t$ and $\FF_t = \sigma\left( Z_i^s: 0\leq s\leq t,  i\in [N] \right)$.
Define $ (I_{i,W}^{t+1}, I_{i,B}^{t+1}) \coloneqq   (W^{t+1}_i, B^{t+1}_i) -(W^t_i, B^t_i)$ to be the reinforcement in the $i^{\mathrm{th}}$ urn at time $t+1$. 
We consider  non-negative and  finite reinforcement, that is $0\leq I_{i,W}^t, I_{i,B}^t <\infty $  for every $i \in [N]$ and $t\geq 0$, such that $\{(I_{i,W}^t, I_{i,B}^t )\}_{i\in [N]}$ are conditionally independent given $\FF_{t-1}$ for every $t\geq 1$.
If the evolution of the $i^{\mathrm{th}}$ urn depends on urns $\{i_1, \dots,i_{k_i} \} \subseteq [N]$,  then  the distribution of $(I^t_{i,W}, I^t_{i,B})$, conditioned on $\FF_{t-1}$ is determined by $Z^{t-1}_{i_1}, \dots, Z^{t-1}_{i_{k_i}}$.
 We call the set $\{i_1, \dots,i_{k_i} \}$ the \emph{dependency set} of the $i^{\mathrm{th}}$ urn.   In particular, if we consider a graph based general two-colour interacting urn model, a natural choice for the dependency set of an urn is the collection of urns in its neighbourhood.

\sloppy Several special cases of the general two-colour interacting urn model have been studied recently. In  \cite{Irene-Paolo2014, Paolo-Idda2013} and \cite{NS}, interacting urn models were studied with {P}\'{o}lya  and Friedman urns respectively.  More precisely, in \cite{Irene-Paolo2014, Paolo-Idda2013} the authors take  $ P\left((I^t_{i,W}, I^t_{i,B}) = (1,0)|\FF_{t-1}\right)  = p Z_i^{t-1}+(1-p)\frac{1}{N} \sum \limits_{j=1}^N Z_j^{t-1} $ and $P\left((I^t_{i,W}, I^t_{i,B}) = (0,1)|\FF_{t-1}\right)  =1- P\left((I^t_{i,W}, I^t_{i,B}) = (1,0)|\FF_{t-1}\right)$,  for every $i\in [N]$ and some fixed $p\in [0,1]$. Thus, the dependency set of every urn is the entire set of urns.  In other words, the underlying network of interactions is a  complete undirected graph on $N$ vertices with self-loops at every vertex. Graph based interactions in urn processes have been studied before in \cite{graph, Remco2016, Crimaldi2020}.
 
In this paper, we consider $N$ interacting two-colour urns, which are placed at the vertices of a finite directed graph. A ball is drawn independently and uniformly at random, simultaneously from all the urns. Each urn then reinforces all the urns in its out-neighbourhood, according to a reinforcement  matrix $R$. 
More precisely, if  $R =  \begin{pmatrix}  \alpha & \delta \\ \gamma & \beta \end{pmatrix}$ is a reinforcement matrix, then if a white ball is drawn from an urn, all its out-neighbours are reinforced with $\alpha$ white and $\delta$ black balls; and if the ball drawn is black, the out-neighbours are reinforced with $\gamma$ white and $\beta$ black balls. 
 We restrict our discussion to non-negative reinforcement matrix which is \emph{balanced},  that is $\alpha + \delta = \gamma + \beta$.  

Note that  under the above dynamics, the urns at the isolated vertices in the graph are neither reinforced, nor do they reinforce any other urn. We therefore assume, without any loss of generality, that there are no isolated vertices in the graph. Further, since the urns at vertices with zero in-degree are never reinforced, their initial configuration is preserved.

\subsection{Summary of results}
We state our main results in Section \ref{Sec:MainResults}. To study the asymptotic properties of $Z_i^t$ for the proposed model with reinforcement matrix $R$,  we divide the discussion into two parts based on the type of reinforcement matrix namely, {P}\'{o}lya type (when $\delta=\gamma=0$) and non-{P}\'{o}lya type (when $\gamma+\delta >0$ and $\alpha+\beta>0$). For non-{P}\'{o}lya type reinforcement, we show that the fraction of white balls in the urns at vertices with non-zero in-degree converge to a deterministic limit almost surely. 
 Further, under certain conditions on the graph and the initial configuration of the urns we observe \emph{synchronisation}, that is,  the almost sure limit is the same across all urns at vertices with non-zero in-degree. We also obtain  fluctuation theorems around the limiting vector.
 
For P\'olya type reinforcement and the case when $\alpha+\beta=0$, we obtain results depending on the presence of the vertices with zero in-degree in the graph.  When there are vertices with zero in-degree such that every vertex with non-zero in degree in the graph can be reached from at least one vertex with zero in-degree,  we show that the fraction of white balls in every  urn converges to a deterministic limit almost surely. 
When there are no vertices with zero in-degree, we limit our discussion of P\'olya type reinforcement to a regular directed graph, that is when the in-degree and the out-degree of every vertex equal a constant $d$. 
In this case  we show that the fraction of balls of white colour in every urn converges to the same  random limit almost surely and  we also obtain $\LL^2$-rates of synchronisation. However, the limiting distribution in this case is not known. 
On this note, we remark that while the reinforcement matrix is of {P}\'{o}lya type, at any given time-step an urn can be reinforced with  balls of both colours, depending upon the balls drawn at that time from its in-neighbours.  Thus, in our model the  random process of  a single urn is nowhere similar to the classical {P}\'{o}lya process and we expect the problem of finding the limiting distribution to be fairly challenging.

We use  stochastic approximation method and the martingale method to study the interacting urn model proposed in this paper. The stochastic approximation  method has been used before (see \cite{Zhang2016, SAurns}) to obtain several interesting results for random processes with self or interactive reinforcement. We refer the reader to \cite{borkar} for  details on stochastic approximation theory. For a  larger discussion on various techniques used to study random processes with reinforcement (including  urn models) we refer the reader to  \cite{survey}.

\subsection{Outline of the paper}
The rest of the paper is organised as follows: in Section~\ref{Sec:prelim} and \ref{Sec:SA}, we describe the model in detail and the corresponding stochastic approximation scheme respectively. In Section~\ref{Sec:MainResults}, we state the main results. In Section~\ref{Sec:proofs}, we present the proofs of all the results stated in Section~\ref{Sec:MainResults}.  In Section~\ref{Sec:Application} we discuss an application of our results to study an opinion dynamics model on finite directed networks.

\subsection{Notation}
Throughout the paper, we use the following notation: for a sequence of random variables $\{ X_t \}_{t \geq 0}$ and a random variable $X$, $X_t \xrightarrow{d} X$ means that $X_t$ converges to $X$ in distribution, as $t \to \infty$. $N(\mu, \Sigma)$ denotes a normal random vector with mean vector $\mu$ and variance-covariance matrix $\Sigma$. For real sequences $\{a_t\}_{t\geq 0}$ and  $\{b_t\}_{t\geq 0}$, we write $a_t = \OO(b_t)$ if there exists two positive constants, say $C$ and $c$, such that $\limsup_{t \to \infty} | a_t/b_t |\leq C$ and $c\leq \liminf_{t \to \infty} | a_t/b_t |$. For a sequence of matrices $M(t) \in \IR^{k \times k}$ and a real-valued function $f$, we write $M(t) = \OO (f(t))$, whenever $M_{ij} (t) = \OO(f(t))$, for all $1 \leq i, j \leq k$. 
$\|\cdot\|$ denotes the standard $\LL^2$ norm and as defined before, $[N]$ denotes the set $\{1,\dots, N\}$. 
For a matrix $M$,  $\lambda_{\min}(M)$ and $\lambda_{\max}(M)$ denote the minimum  and the maximum  of the real part of the eigenvalues of $M$ respectively. For an eigenvalue $\lambda$, $\Re(\lambda)$ denotes the real part of $\lambda$. 


\section{Model dynamics} \label{Sec:prelim}
Let $G=(V, E)$ be a directed graph with $V = [N]$ and $E\subseteq V\times V$. For every vertex $i\in V$, let $V(i) \coloneqq  \{j \in V: (j,i)\in E\}$ denote the in-neighbourhood of $i$ and  define $\diin\coloneqq |\{ j\in V: (j,i)\in E\}| = |V(i)|$  and $\diout \coloneqq | \{j\in V: (i,j)\in E \}|$ to be  the in-degree and out-degree of  $i$ respectively. We assume that there are no isolated vertices in the graph, that is there does not exist any $i\in V$ such that $\diin = \diout =0$. We write $i\to j$, if there is a directed edge from $i$ to $j$ and  $i \rightsquigarrow j$, if there exists a path  $i=i_0\to i_1\to \dots \to i_{k-1}\to i_k=j$ from $i$ to $j$, for some $i_1,\dots, i_{k-1}\in V$. Throughout this paper we use the term \emph{d-regular graph} for a directed graph such that $\diin= \diout= d$ for every $i\in V$.

Suppose there is an urn at every vertex of $G$ such that each urn contains balls of two colours, white and black. Let $(W^t_i, B^t_i)$ be the configuration of the urn at vertex $i$, where $W^t_i$ and $B^t_i$ denote the number of white balls and black balls respectively, at time $t\geq 0$. Let $m \in \IZ^+ $ and $\alpha, \beta \in \{0, 1, \dots, m\}$ be fixed, then  given $\{(W^t_i, B^t_i)\}_{i\in V}$, we update the configuration  of each urn at time $t+1$ as follows:
	\begin{quote}
A ball is selected uniformly at random from all the urns simultaneously and independently of every other urn.  The colours of  these balls are noted and they are replaced into their respective urns. For every $i \in V$, if the colour of the ball selected  from the $i^{\mathrm{th}}$ urn is white, $\alpha$ white and $(m-\alpha)$ black balls are added to each urn $j$, such that $i\to j$; and if the colour of the ball selected from the $i^{\mathrm{th}} $ urn is black then $(m-\beta)$ white balls and $\beta$ black balls are added to each urn $j$, such that $i\to j$. 
\end{quote}
That is, each urn $i$ reinforces urn $j$ such that $i\to j$, according to the following reinforcement matrix
\begin{equation} \label{Rmatrix} R = \begin{pmatrix}
    \alpha & m-\alpha \\
    m-\beta & \beta
   \end{pmatrix}. 
   \end{equation}
    We call this a P\'olya type reinforcement if $\alpha= \beta= m$,  that is when $R=mI$ and non-P\'olya type reinforcement when $0<\alpha+\beta <2m$. To simplify the notation, we define $a \coloneqq \alpha/m$ and $b \coloneqq \beta/m$. 
Let $Z_i^t \coloneqq \dfrac {W^t_i}{W^t_i+B^t_i}$  be the proportion of white balls in the $i^{\mathrm{th}}$ urn at time $t$ and $Z^t\coloneqq  (Z_1^t, \dots, Z_N^t)$. Define the $\sigma$-field $\FF_t = \sigma \left( Z^0, Z^1, \ldots, Z^t \right)$ and let $Y^t_i$ denote the indicator of the event that a white ball is drawn from the $i^{\mathrm{th}}$ urn at time $t\geq 1$. Then the distribution of $Y_i^{t}$, conditioned on $\FF_{t-1}$ is given by
\[ Y^t_i =  \begin{cases}
1 & \text{ with probability } Z_i^{t-1},\\
 0 & \text{ with probability } 1- Z_i^{t-1}.
\end{cases} \]
Note that, conditioned on $\FF_{t-1}$,  $\{Y_i^t\}_{i\in V}$ are independent random variables.  
The proposed model is a general two-colour interacting urn model, as defined in Section \ref{sec1.1}, where 
the dependency set of the $i^{\mathrm{th}}$ urn is $V(i)$ such that given $\FF_{t-1}$, 
$(I_{i,W}^t, I_{i,B}^t) =  \sum_{j\in V(i)} (\alpha, m-\alpha) Y_j^{t-1}  + (m-\beta, \beta)(1-Y_j^{t-1})$, for $t\geq 1$.

We divide the vertex set of the graph into two disjoint sets,  say $ V=S\cup F$, where $S = \{ i \in V: \diin=0\}$ and $F = \{ i \in V: \diin>0\}$. We call the vertices in $S$ as \emph{stubborn} vertices, as the urns at these vertices  are not reinforced by any other urn i.e. $V(i)=\emptyset$  and we call $F$ as the set of \emph{flexible} vertices. Clearly, for every $i\in S$ 
\[W_i^t = W_i^0 \qquad \text{and } \qquad B_i^t = B_i^0, \qquad  \forall \ t >0, \]
whereas for every  $i \in F$,  we have 
\begin{eqnarray}
 W_i^{t+1} &=& W_i^t + \alpha \sum_{j\in V(i)} Y_j^{t+1} + (m-\beta) \sum_{j\in V(i)} (1-Y_j^{t+1}) \nonumber    \\
 &=& W_i^t + m\sum_{j\in V(i)}  \big( 1-b + (a+b-1) Y_j^{t+1} \big) \nonumber \\
 &=& W_i^t + m(1-b) \diin+ m(a+b-1)  \sum_{j\in V(i)} Y_j^{t+1}, \qquad  \forall \ t >0, 
  \label{Recur:Wt}
\end{eqnarray}
and 
 \begin{align*}
 B_i^{t+1} = B_i^t + mb \diin- m(a+b-1)  \sum_{j\in V(i)} Y_j^{t+1}, \qquad  \forall \ t >0.
\end{align*}

\noindent 
Let $ T_i^t = W_i^t +B_i^t$ be the total number of balls in the $i^{\mathrm{th}}$ urn at time $t$.
 Then, for every $i\in V$ 
\begin{equation}
\label{Equ:TotalBalls}
 T_i^t = T_i^{t-1} +m\, \diin= T_i^0 + tm\,\diin.
 \end{equation}
Thus,  the total number of balls in every urn is deterministic and increases linearly with time $t$, at every flexible vertex. 
In the next section, we briefly discuss stochastic approximation theory, a technique which is used in the later sections to obtain  the asymptotic results for $Z^t$ .


\section{Stochastic approximation scheme} \label{Sec:SA} 
A stochastic approximation scheme refers to a $k$-dimensional recursion of following type
\begin{equation}\label{basicSA}
 x_{t+1} = x_t + \gamma_{t+1} \left( h(x_t) +  \Delta M_{t+1} +\epsilon_{t+1} \right), \qquad \forall t \geq 0
\end{equation}
where $h: \IR^k \to \IR^k$ is a Lipschitz function, $\{\Delta M_t\}_{t\geq 1}$ is a bounded square-integrable martingale difference sequence and $\{\gamma_t\}_{t\geq 1}$ are positive step-sizes satisfying conditions that ensure that $ \sum_{t\geq 1} \gamma_t$ diverges, but slowly. More precisely, we assume the following.
\begin{itemize}
\item[(i)] $\sum_{t\geq 1} \gamma_t = \infty$ and $\sum_{t\geq 1} \gamma_t^2 < \infty$.
\item[(ii)] There exists $ C>0$ such that  $\IE\left [\|\Delta M_{t+1}\|^2 \vert \GG_t\right] \leq C$, almost surely  $\forall t \geq 0$,  where $\GG_t = {\sigma (x_0, M_1, \ldots, M_t)}$.
\item[(iii)] $\sup_{t\geq 0} \|x_t\| < \infty$, almost surely.
\item [(iv)] $\{ \epsilon_t \}_{t\geq 1}$ is a  bounded sequence such that $\epsilon_t \to 0$, almost surely as $t\to \infty$.
\end{itemize}	 
Then, the theory of stochastic approximation says that the iterates of \eqref{basicSA} converge almost surely to the stable limit points of the solutions of  the O.D.E. given by $\dot{x}_t = h(x_t)$. For explicit results and details on a standard stochastic approximation scheme (with $\epsilon_t = 0 \ \forall t \geq 0$), we refer the readers to Lemma 1 and Theorem 2 from Chapter 2 of \cite{borkar}.  Further in Chapter 2, the author discusses various extensions of the standard stochastic approximation scheme including the stochastic approximation scheme of the form \eqref{basicSA}. In this section, we use the stochastic approximation theory to study the limiting behaviour of $Z^t$ and  write  a stochastic approximation scheme  of the form \eqref{basicSA} for $Z^t$. We first establish some useful notation.

Without loss of generality, we take $F = \{1, \dots, |F|\}$ and $S = \{|F|+1,\dots, N\}$. For a vector $g\in \IR^N$ and $B \subseteq \{1, \ldots, N\}$, let $g_B$ denote the $|B|$-dimensional vector obtained by restricting $g$ to $B$. Similarly, for a matrix $M\in \IR^{N \times N}$ and $B \subseteq \{1, \ldots, N\}$, let $M_B$ denote the $|B| \times |B|$ matrix obtained by restricting $M$ to the index set $B \times B$. By $\bone$ and $\bz$ we denote a matrix of appropriate dimension with all entries equal to $1$ and $0$ respectively. 
We write $Z^t = (Z_1^t, \dots, Z_N^ t)= (Z_F^t, Z_S^0)$ as a row-vector and define $N \times N$ matrices
\[ \bT^{t} \coloneqq \begin{pmatrix} T_1^t & 0 & \cdots& 0\\  0& T_2^t & \cdots& 0\\        0& \cdots & \ddots& \vdots\\ 0& 0& \cdots& T_N^t \\ \end{pmatrix}
       \qquad and \qquad
       \bD \coloneqq  \begin{pmatrix} d_1^{\mathrm{in}} & 0 & \cdots& 0\\  0& d_2^{\mathrm{in}} & \cdots& 0\\        0& \cdots & \ddots& \vdots\\ 0& 0& \cdots& d_N^{\mathrm{in}}\\ \end{pmatrix}. \]
Then 
\[ \bT^{t} = \begin{pmatrix} \bT_F^t &\bz \\
 \bz  & \bT_S^0 \\ \end{pmatrix}
        \qquad and \qquad
       \bD =\begin{pmatrix} \bD_F&\bz  \\
      \bz &\bz \end{pmatrix}, \]
where $\bT_F^t$ and $\bD_F$ are both $|F| \times |F|$ matrices. 
Note that,  for $i\in S$ there is no $j\in V$ such that $j\to i$, therefore we can decompose the adjacency matrix $A$ of the graph as follows
\[A=\begin{pmatrix}
A_F& \bz\\
A_{S,F}  &\bz
\end{pmatrix},\]
where $ A_F $ is a $ |F|\times |F|$ matrix and $A_S$ is a $|S|\times |F|$ matrix.  We define the scaled adjacency matrix
\[ \tA  \coloneqq A \begin{pmatrix} \bD_F^{-1}&\bz  \\
      \bz &\bz \end{pmatrix} = \begin{pmatrix} \tA_F &\bz \\ \tA_{S,F} &\bz  \end{pmatrix}, \]
where  the  $(i,j)^{\mathrm{th}}$ entry of $\tA$  is equal to $\frac{1}{\djin} \I_{i \to j}$, whenever $d_j^{in}>0$ and equal to $0$ otherwise.
We now write the evolution of $Z^t_i$,  for $i \in F$, as a stochastic approximation scheme. From \eqref{Recur:Wt} and \eqref{Equ:TotalBalls} we get
\begin{eqnarray*}
 Z_i^{t+1} & = & \frac{1}{T_i^{t+1}} W_i^{t+1} \\
 & = & \frac{T_i^t}{T_i^{t+1}} Z_i^{t} +  \frac{m(1-b) \diin}{T_i^{t+1}} + \frac{m(a+b-1) }{T_i^{t+1}}   \sum_{j\in V(i)}  Y_j^{t+1} \\
 & = &   \left(1- \frac{m\diin}{T_i^{t+1}} \right)Z_i^{t} +   \frac{m(1-b) \diin}{T_i^{t+1}} + \frac{m(a+b-1) }{T_i^{t+1}}   \sum_{j\in V(i)}  Y_j^{t+1}. 
  \end{eqnarray*} 
Define  $  \Delta M_i^{t+1} \coloneqq (a+b-1) \left( Y_i^{t+1} - \IE\left[Y_i^{t+1}\vert \FF_t \right] \right) $, then we can write 
 \begin{eqnarray}
 Z_i^{t+1} & = &  \left(1- \frac{m\diin}{T_i^{t+1}} \right)Z_i^{t}  +   \frac{m(1-b) \diin}{T_i^{t+1}}   +\frac{m(a+b-1)}{T_i^{t+1}} \sum_{j \in V(i)}   \IE\left[Y_j^{t+1}\vert \FF_t \right] +\frac{m}{T_i^{t+1}} \sum_{j \in V(i)} \Delta M_j^{t+1} \nonumber \\
& = &   Z_i^t + \frac{m}{T_i^{t+1}} \left( (-Z_i^{t}  + 1-b) \diin + (a+b-1)  \sum_{j \in V(i)}  Z_j^t +\sum_{j \in V(i)} \Delta M_j^{t+1}\right). \label{Z_t_recur} 
\end{eqnarray} 
 Let $\Delta M^t \coloneqq ( \Delta M_1^t, \dots, \Delta M_{N}^t )$ be the martingale difference vector in $\IR^N$. We can write the recursion from \eqref{Z_t_recur} in vector form as follows
\begin{align}
 Z^{t+1}_F &= Z^t_F + m \bigg(  \left(-Z^t_F + (1-b) \bone\right) \bD_F +    (a+b-1)  (Z^t  A)_F +  \left(\Delta M^{t+1}  A\right)_F   \bigg)  \left( \bT_F^{t+1}\right)^{-1} \nonumber  \\
 & = Z^t_F + m\left(  -Z^t_F  + (1-b) \bone +   (a+b-1)\left( Z^t  \tA\right)_F +  \left(\Delta M^{t+1}  \tA\right)_F   \right)  \bD_F \left( \bT_F^{t+1}\right)^{-1}\\
  & = Z^t_F +   \left( h( Z^t_F)+  \left( \Delta M^{t+1}  \tA\right)_F    \right)  (m\bD_F)  \left(\bT_F^{t+1}\right)^{-1},
\end{align}
where $h:[0,1]^{|F|} \to \mathbb{R}^{|F|}$ is such that
 \begin{equation} \label{hfunction} 
h(z) = -z + (1-b) \bone +  (a+b-1) \left((z , Z_S^0) \tA\right)_F.
\end{equation}
Here  $\left((z , Z_S^0) \tA\right)_F  = z\tA_F + Z_S^0 \tA_{S,F}$, for $z\in [0,1]^{|F|}$. The above recursion can be written as 
\begin{equation*}
 Z_F^{t+1} = Z_F^t + \frac{1}{t+1}  \left( h(Z_F^t)+  \left(\Delta M^{t+1}  \tA\right)_F   \right) +  \frac{1}{t+1} \epsilon_{t+1},
\end{equation*}
where
\[\epsilon_{t+1} = \left( h(Z_F^t)  +   \left(\Delta M^{t+1}  \tA\right)_F  \right) \left( m(t+1)\bD_F \left( \bT_F^{t+1}\right)^{-1}  -I\right). \]
Thus, we get a recursion of the form \eqref{basicSA} given by
\begin{equation}  \label{Recursion-Vector} 
Z_F^{t+1} = Z_F^t +\gamma_{t+1} h(Z_F^t)  +\gamma_{t+1} (\Delta M^{t+1}\tA)_F +\gamma_{t+1}  \epsilon _{t+1},
\end{equation}
where $\gamma_t = \dfrac{1}{t}$ and   $(\Delta M^{t+1}\tA)_F$ is a bounded martingale difference sequence.  Using \eqref{Equ:TotalBalls}, it can be easily  verified that $\epsilon_t \to 0$ as $t \to \infty$ and  $h$ is a Lipschitz function.

Thus, the recursion in \eqref{Recursion-Vector}  satisfies the required assumptions (i)-(iv). Now, from the stochastic approximation theory, we know that the limit points of the recursion in \eqref{Recursion-Vector} almost surely coincide with the asymptotically stable equilibria of the O.D.E. given by
\begin{equation}
\label{ODE}
\dot{z} = h(z).
\end{equation}
Therefore, one can analyse the limiting behaviour of the interacting urn process by analysing the zeroes of the $h$ function and the eigenvalues of its Jacobian. Further, we use stochastic approximation results form \cite{Zhang2016} to establish central limit theorems for $Z_F^t$. 

\section{Main results} \label{Sec:MainResults}
In this section, we state our main results. In Section~\ref{sec:sync} we state an almost sure convergence theorem for $Z_F^t$ and  show that synchronisation occurs under certain conditions on the initial configuration of the urns and on the structure of the underlying graph. Recall that, by synchronisation we mean that the proportion of while balls converges to the same limit for every urn. Further, in Section~\ref{Sec:CLT} we state the central limit theorems for $Z_F^t$  for a non-P\'olya type reinforcement. In Section~\ref{Sec:Results-polya}, we consider a special case, namely P\'olya type reinforcement on a regular graph and show that $Z_F^t$ converge to a random vector almost surely. In addition, the urns synchronise in the sense that $Z_i^t$ converge to the same random variable for every $i\in F$. 


\subsection{Convergence and synchronisation results} \label{sec:sync}
\begin{theorem}\label{Thm:SLLN-NonPolya}
Suppose
\[ z^* \coloneqq \left( (1-b) \bone+ (a+b-1)Z^0_S \tA_{S,F} \right) \left(I-(a+b-1)\tA_F \right)^{-1}. \]
Then $ Z_F^t \longrightarrow  z^*$ almost surely as  $t\to \infty$, if either of the following holds

\begin{enumerate}[(i)]
\item Reinforcement is of non-P\'olya type that is, $R\neq mI$ and $a+b>0$.
\item  $S\neq \emptyset$ such that  for every $f \in F$ there exists $s \in S$ such that $s \rightsquigarrow f$.\end{enumerate}
\end{theorem}

\begin{remark}
Theorem \ref{Thm:SLLN-NonPolya} does not address the case when $S = \emptyset$ and $|a+b-1|=1$, in this case
the corresponding stochastic approximation scheme as in \ref {Recursion-Vector} holds with
$$ h(z)= -z(I-(a+b-1)\tA_F) +\bone.$$
Clearly, $h(z)=\mathbf{0}$  has a  unique solution whenever $I-(a+b-1)\tA_F$ is invertible.
 In fact when $I-(a+b-1) \tA_F$ is not invertible,  $Z_F^t$ need not admit a deterministic limit.  However, we expect that on a strongly connected graph, $ Z^{t}$ converges almost surely to a random limit. In Section \ref{Sec:Results-polya} we study the asymptotic of $Z^t$, when $S = \emptyset$ and $R=mI$ that is, $a+b-1=1$, on a regular graph as a special case. We show that $Z_i^{t}$ converges almost surely to the same random limit for every $i \in [N]$. 

\end{remark}

One straightforward conclusion from the above theorem is that  $Z^t \to (z^*, Z_S^0)$ almost surely, as $t\to \infty$.  Further, for synchronisation to occur among  the set of flexible vertices $F$,  we expect the reinforcement at any two flexible vertices to be similar. This manifests itself as a condition that $Z_S^0 \tilde{A}_S$ is a constant vector and that for $i, j \in F, i \neq j$, $\frac{|V(i) \cap F|}{\diin} = \frac{|V(j) \cap F|}{\djin} $. More precisely, we have the following result.

\begin{corollary}[Synchronisation] Suppose $Z^0_S \tA_{S,F} = c_1\bone $ and $\bone \tA_F =c_2 \bone$,  for some constants $c_1, c_2 \in [0,1]$, then under the condition $(i)$ or $(ii)$ of Theorem \ref{Thm:SLLN-NonPolya} 
 \[Z_F^t  \xrightarrow{a.s.}   \dfrac{(1-b) + (a+b-1) c_1 }{1-(a+b-1)c_2}\bone.\]
In particular, when  reinforcement is of non-P\'olya type and $S=\emptyset$,  then as $t \to \infty$
 \[Z_F^t  \xrightarrow{a.s.}     \dfrac{1-b }{2-a-b}\bone.\]
\label{Cor1}
\end{corollary}

\subsection{Fluctuation results} \label{Sec:CLT}
We now state the fluctuation theorems for $Z_F^t$ around the almost sure limit $z^*$.  Define
\begin{equation}\label{Def:rho} 
H \coloneqq -\frac{\partial h}{\partial z}=I-(a+b-1)\tA_F \quad \text{and} \quad \rho \coloneqq \lambda_{\min}(H),
\end{equation}
where $I$ is a $|F|\times |F|$ identity matrix. 
 As we will see in the results below, the scaling for fluctuation theorems depends on $\rho$. In the case, when $0<\rho < 1/2$, there exist finitely  many complex random vectors $\xi_1,\dots, \xi_l$ such that  $(Z_F^t-z^*)$  scaled appropriately can be almost surely approximated by a weighted sum of  $\xi_1,\dots, \xi_l$. The scaling in this case depends explicitly on $\rho$. For details we refer the reader to Theorem 2.2 of \cite{Zhang2016}. In this paper, we only discuss the cases $\rho > 1/2$ and $\rho = 1/2$ and obtain Gaussian limits with appropriate scaling. 



\begin{theorem} \label{Thm: CLT-rho> 1/2}
Suppose $ Z_F^t \longrightarrow  z^*$ almost surely as  $t\to \infty$ and $\rho >1/2$, then 
 \begin{equation}
\sqrt{t} \left(Z_F^t - z^* \right) \; \xrightarrow{d} N\left(\bz, \Sigma \right) \quad \text{as } t \to \infty,
\end{equation}
with
\[\Sigma =  \int_0^\infty \left( e^{-\left (H - \frac{1}{2} I \right)u} \right)^\top (\tA^\top \Theta \tA)_F  e^{-\left (H - \frac{1}{2} I \right)u} du,\]
where  $H$ is as defined in \eqref{Def:rho}  and  $\Theta$ is a  $N\times N$ diagonal matrix such that
\[\Theta_{i,i} =\begin{cases}
(a+b-1)^2 z_i^* (1-z_i^*) & \text{ for } i \in F,\\
(a+b-1)^2 Z_i^0(1-Z_i^0) & \text{ for } i \in S.
\end{cases}  \]
\end{theorem}
 

\begin{corollary}\label{Cor:Sigma-rho>1/2}
Suppose the reinforcement is of non-P\'olya type and $\tA=\tA^\top$. Then  for $\rho >1/2$   Theorem \ref{Thm: CLT-rho> 1/2} holds with
\[\Sigma = C(a,b)\; \tA_F^2 \left( I- 2(a+b-1)\tA _F\right )^{-1},\]
where $C(a,b) \coloneqq  \dfrac{ (a+b-1)^2(1-a)(1-b)}{(2-a-b)^2}.$
\end{corollary}


\begin{theorem} \label{Thm: CLT-rho=1/2}
Suppose $ Z_F^t \longrightarrow  z^*$ almost surely as  $t\to \infty$ and $\rho=1/2$, then 
\begin{equation}
 \sqrt{\frac{t}{ \log(t)}} \left(Z_F^t - z^* \right) \; \xrightarrow{d} N\left(\bz, \tilde \Sigma \right) \quad \text{as } t \to \infty,
\end{equation}
with
\[\tilde \Sigma =  \lim_{t\to \infty} \frac{1}{\log t } \int_0 ^ {\log t} \left( e^{ -(H-1/2I)u}\right)^\top \tilde (\tA^\top \Theta \tA)_F \left( e^{ -(H-1/2I)u}\right) du, \]
for $H$  as defined in \eqref{Def:rho}  and  $\Theta$  as defined in Theorem \ref{Thm: CLT-rho> 1/2}.
\end{theorem}

	
\begin{corollary}\label{Cor:Sigma-rho=1/2}
Suppose the reinforcement is of non-P\'olya type and $\tA=\tA^\top$. Then  for $\rho =1/2$   Theorem \ref{Thm: CLT-rho> 1/2} holds with
 	\[\tilde \Sigma =\frac{C(a,b)}{N}J,\]
where $J$ is a $|F|\times |F|$ matrix with all elements equal to $1$ and $C(a,b)$ is as defined in Corollary \ref{Cor:Sigma-rho>1/2}.
\end{corollary}


\begin{remark}[Friedman type reinforcement] As an immediate consequence of Corollary \ref{Cor1}, we get that with $S= \emptyset$ for the Friedman type reinforcement that is when $a=b \in (0,1)$,
$Z_F^t \longrightarrow \frac{1}{2} \bone $ almost surely. 
Further, in this case $C(a,b)$ simplifies and Corollary \ref{Thm: CLT-rho> 1/2} and Corollary \ref{Thm: CLT-rho=1/2} hold with $C(a,b) = \left( a-\frac{1}{2} \right)^2 $.
\end{remark}


\subsection{Convergence results for P\'olya type reinforcement} \label{Sec:Results-polya}
In this section, we consider the P\'olya type reinforcement on a finite directed graph. Throughout this section, we make the following assumptions.
\begin{enumerate}
\item [\bf (A1) ]  The graph is  $d$-regular and  the scaled adjacency matrix $\tA = \frac{1}{d} A$ is diagonalisable. More precisely,  there exists an invertible  matrix $P$ such that  $\tA = P \Lambda P^{-1} $ for a diagonal matrix $ \Lambda$  and $\lambda_{\max}(\tA)= 1$. 
\item [\bf (A2) ]  The initial number of balls in each urn is  the same, that is $\bT^0 = T^0 I$, for some $ T^0 \geq 1$. 
\end{enumerate}
Note that, for a $d$-regular graph we have $S=\emptyset$ and $F= V$, therefore to simplify the notation, we remove the subscript $F$ throughout this section.
In this case, the associated O.D.E. obtained from \eqref{ODE} reduces to
$ \dot{z} = h(z) = -z( I- \tA).$
The equilibrium points of this O.D.E. are the left eigenvectors of $\tA$  corresponding to the eigenvalue $1$. Observe that these equilibrium points are not stable as one of the eigenvalues of $\frac{\partial h}{\partial z}$ is $0$.
Therefore this case requires a different approach. Specifically, we use martingale theory to obtain convergence results. 
 
 \noindent
Under assumptions {\bf (A1)} and {\bf (A2)}
\begin{equation} \label{A:total}
 \bT^t =  (T^0+ tmd) I \eqqcolon T^t I .
\end{equation}
We define
\begin{equation*}
 \bar{Z}^t \coloneqq  \frac{\sum_{j=1}^N W_j^t}{\sum_{j=1}^N T_j^t} = \frac{\sum_{i=1}^N W_i^t}{N T^t} = \frac{1}{N} \sum_{i=1}^N Z_i^t =  \frac{1}{N} Z^{t} \bone^\top .
\end{equation*}


\begin{theorem}\label{Thm:Polya-regular-ConvgRate}
Suppose the reinforcement scheme is of P\'olya type, that is $R = m I$ and the assumptions {\bf (A1)} and {\bf (A2)} hold. Then
\begin{equation}\label{Var:phi}
\Var(Z^{t}- \bar{Z}^t \bone) =
\begin{cases} 
\OO\left (t^{-1}\right ) & \text{ if } \lambda_{\min}(\tA) <1/2, \\ 
  \OO\left(\frac{\log(t)}{t}\right )  & \text{ if } \lambda_{\min}(\tA) = 1/2, \\ 
\OO\left ( t^{2 \Re(\lambda_{(2)}) -2}\right ) & \text{ if } \lambda_{\min}(\tA)>1/2,
\end{cases}
\end{equation}
where $\lambda_{(2)}$ is the eigenvalue of $\tA$ with second largest real part.
Moreover, $ Z^{t}- \bar{Z}^t \bone \longrightarrow  0$  almost surely,  as  $t\to \infty$. 
\end{theorem}

\begin{corollary}\label{Cor:Polya-regular-ConvgRate}
Let $\hZ^t = Z^t\tilde{A}$ be the vector of neighbourhood averages of the proportion of white balls in the urns. Then under the setting of Theorem \ref{Thm:Polya-regular-ConvgRate},  $\Var(\hat{Z}^{t}- \bar{Z}^t \bone)  \longrightarrow  \bf{0}_{N\times N} $, as $t \to \infty.$
\end{corollary}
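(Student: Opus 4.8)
The plan is to reduce the statement to Theorem~\ref{Thm:Polya-regular-ConvgRate} by exploiting the fact that $\hat{Z}^t$ differs from $Z^t$ only through multiplication by the fixed matrix $\tilde A$, which acts as a contraction on the relevant subspace. First I would record the basic symmetry facts: for an undirected $d$-regular graph we have $D = dI$, so $\tilde A = D^{-1}A = A/d$ is symmetric and doubly stochastic. In particular $\bone \tilde A = \bone$ and $\tilde A \bone^T = \bone^T$, and $\tilde A$ has spectral norm equal to $1$ with all eigenvalues lying in $[-1,1]$.

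The key algebraic step is the identity
\[
\hat{Z}^t - \bar{Z}^t \bone = (Z^t - \bar{Z}^t \bone)\,\tilde A .
\]
This holds because $\hat{Z}^t = Z^t \tilde A$ and, since $\bar{Z}^t$ is a scalar and $\bone \tilde A = \bone$, we have $\bar{Z}^t \bone = \bar{Z}^t(\bone \tilde A) = (\bar{Z}^t \bone)\tilde A$; subtracting gives the claim. Writing $X^t := Z^t - \bar{Z}^t \bone$, I would also note that $X^t \bone^T = Z^t\bone^T - \bar{Z}^t N = 0$, so $X^t$ always lies in the hyperplane orthogonal to $\bone$.

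It then remains to transfer the variance bound through $\tilde A$. Interpreting $Var(\cdot)$ of a random vector as the trace of its covariance matrix (equivalently $E\|\cdot - E[\cdot]\|^2$), the covariance of the row vector $X^t \tilde A$ is $\tilde A^T\,\mathrm{Cov}(X^t)\,\tilde A = \tilde A\,\mathrm{Cov}(X^t)\,\tilde A$ by symmetry, so
\[
Var(\hat{Z}^t - \bar{Z}^t \bone) = \mathrm{tr}\!\left(\tilde A^2\,\mathrm{Cov}(X^t)\right).
\]
Since $\tilde A$ is symmetric with eigenvalues in $[-1,1]$ we have $0 \preceq \tilde A^2 \preceq I$, and because $\mathrm{Cov}(X^t) \succeq 0$ the inequality $\mathrm{tr}\!\left((I-\tilde A^2)\,\mathrm{Cov}(X^t)\right) \ge 0$ gives $\mathrm{tr}(\tilde A^2\,\mathrm{Cov}(X^t)) \le \mathrm{tr}(\mathrm{Cov}(X^t)) = Var(Z^t - \bar{Z}^t \bone)$. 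Hence $Var(\hat{Z}^t - \bar{Z}^t \bone) \le Var(Z^t - \bar{Z}^t \bone)$, and the right-hand side tends to $0$ by Theorem~\ref{Thm:Polya-regular-ConvgRate} in each of the three eigenvalue regimes, which completes the proof.

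I expect essentially no serious obstacle here: the only point requiring care is fixing the meaning of $Var$ for a vector and justifying that multiplication by $\tilde A$ does not increase total variance. One could alternatively, and more crudely, bound $\|X^t \tilde A\| \le \|\tilde A\|_{op}\,\|X^t\| = \|X^t\|$ and take expectations, avoiding the trace manipulation entirely. If sharper rates were desired one would instead use that $X^t \perp \bone$ and replace the operator norm by the second-largest eigenvalue modulus of $\tilde A$, but for mere convergence to $0$ the contraction bound suffices.
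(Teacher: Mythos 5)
Your proof is correct, but it takes a different route from the paper. The paper proves this corollary by re-running the entire recursive variance computation of Theorem~\ref{Thm:Polya-regular-ConvgRate} with $\Psi^t = Z^t\bigl[\tilde A - \tfrac{1}{N}J\bigr]$ in place of $\Phi^t = Z^t\bigl[I - \tfrac{1}{N}J\bigr]$, i.e.\ ``same argument, different projector,'' which yields the analogous asymptotic expression for $\mathrm{Var}(\Psi^t)$ directly. You instead use Theorem~\ref{Thm:Polya-regular-ConvgRate} as a black box: the identity $\hat Z^t - \bar Z^t\bone = (Z^t - \bar Z^t\bone)\tilde A$ (valid since $\bone\tilde A = \bone$) plus the contraction property of the symmetric doubly stochastic matrix $\tilde A$ gives the domination $\mathrm{Var}(\hat Z^t - \bar Z^t\bone) \le \mathrm{Var}(Z^t - \bar Z^t\bone)$, and the trace inequality $\mathrm{tr}(\tilde A^2 C) \le \mathrm{tr}(C)$ for $C \succeq 0$ is justified correctly. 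Your reduction is shorter and avoids re-checking the commutation relations in the recursion; moreover, since it is a domination rather than a mere limit statement, it transfers the explicit rates from the theorem, which is what is actually needed when this corollary is invoked in the summability estimate of Lemma~\ref{Quasi-Martingale} (the corollary as stated only asserts convergence to $0$, but its use there requires $\sqrt{\mathrm{Var}(\Psi^t)}/N^{t+1}$ to be summable). The paper's route would in principle give the sharp constants in the asymptotics of $\mathrm{Var}(\Psi^t)$, but for the stated conclusion and for its downstream use, your argument suffices and is cleaner. The only interpretive point, which you flag explicitly, is reading $\mathrm{Var}$ of a vector as the trace of the covariance matrix; since a positive semidefinite matrix tends to zero iff its trace does, this is harmless.
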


 The following theorem establishes that for every $i \in V$, $Z^t_i$ converge almost surely to the same limiting random variable. 

\begin{theorem}[Synchronization]  \label{Thm: SLLN-Polya-regular}
Suppose the reinforcement scheme is of P\'olya type, that is $R = m I$ and the assumptions {\bf (A1)} and {\bf (A2)} hold. Then  there exists a finite random variable $Z^\infty$ such that as $t \to \infty.$
\begin{equation}
 Z^t  \xrightarrow{a.s.}   Z^\infty \bone.
\end{equation}
\end{theorem}


\section{Proofs of the main results} \label{Sec:proofs}
In this section, we prove all the results from Section~\ref{Sec:MainResults}. For proving results from Section~\ref{sec:sync}, we use stochastic approximation theory, which was discussed in Section~\ref{Sec:SA}. We start with the following lemma.


\begin{lemma}\label{Lem:invertibility}
Let $M$ be a $n \times n$  real non-negative matrix   such that each column sum of $M$ is less than or equal to $1$. Then $rM-I$ is invertible for all $ r\in \mathbb{R}$ whenever $|r| <1$.
\end{lemma}
The above lemma follows from the Perron-Frobenius Theorem, however for the sake of completeness we include a proof here.
\begin{proof}[Proof of Lemma \ref{Lem:invertibility}]
 Note that, the case $r=0$ is trivial. For $r\neq 0$, suppose the matrix $(r M-I)$ is not invertible. Then there exists a vector ${\bf w} = (w_1, \ldots, w_n) \in \mathbb{C}^n$ such that 
$\bw (r M-I) =\bz$, that is $ \bw M = \frac{1}{r} \bw, $
which implies that for every $j\in \{1,\dots, n\}$
\begin{equation} \label{Eq:leftev}
\frac{1}{|r|} | w_j| =\big|\sum_{k=1}^n w_k M_{k,j}  \big|  
\leq (\max_{k} |w_k|)  \sum_{k=1}^n M_{k,j}  \leq  \max_k |w_k|.
\end{equation}
The last step follows since each column sum of $M$ is less than or equal to $1$. Now if $j = argmax \{|w_k|; k =1,\ldots, n\}$, then from \eqref{Eq:leftev} we must have $1/|r| \leq 1$, which contradicts our assumption of $|r|<1$. 
\end{proof}


\begin{proof}[Proof of Theorem \ref{Thm:SLLN-NonPolya}]
As discussed in Section \ref{Sec:SA},  it is enough to study the corresponding O.D.E. in \eqref{ODE},  of the stochastic approximation scheme for $Z^t_F$ obtained in \eqref{Recursion-Vector}. We know that a point $z^* \in [0, 1]^{|F|}$ is an equilibrium point of the associated O.D.E. $\dot{z} = h(z),$ if $h(z^*) = 0$. For the $h$ function given in equation \eqref{hfunction}, we have $h(z^*) = 0$, if and only if 
\begin{equation}
\label{Equ:ODEequil}
z^* - (a+b-1)( z^*\tA_F  +Z^0_S \tA_{S,F})  = (1-b)\bone.
\end{equation}
Thus the unique equilibrium point is given by 
\[z^* = \left( (1-b)\bone  +(a+b-1) Z^0_S \tA_{S,F}\right) \left(I- (a+b - 1)\tA_F \right)^{-1},\]
whenever  the matrix $I- (a+b - 1)\tA_F $ is invertible. We now show that under the assumptions of the theorem, this is indeed true. 
\begin{enumerate}
\item[(i)]  When $R\neq mI$ and $a+b>0$ we have $|a+b-1|<1$. Further, since each column sum of $\tA_F$ is less than or equal to $1$, the invertibility of matrix  $I- (a+b - 1)\tA_F $  follows from Lemma \ref{Lem:invertibility}.
\item[(ii)] We now  show that if for every $f \in F$ there exists $s \in S$ such that $s \rightsquigarrow f$, then $I-(a+b-1) \tilde{A}_F$ is invertible. Observe that, the set of flexible vertices $F$ can be partitioned into sets $F_1, \dots, F_k$  such that the graph $G$ restricted to $F_i$ is strongly connected,  for every $F_i$. Then the scaled adjacency matrix can be written as an upper block triangular matrix as follows
\[ \tA_F =\begin{pmatrix}
A_{F_1} & A_{F_1,F_2}  & \dots & A_{F_1,F_k}\\
0& A_{F_2}  & \dots & A_{F_2,F_k}\\
\vdots &\vdots & \ddots & \vdots  \\
0&\dots & 0& A_{F_k} 
\end{pmatrix}   \bD_F^{-1}  
\eqqcolon  \begin{pmatrix}
\tA_{F_1} & \tA_{F_1,F_2} & \dots & \tA_{F_1,F_k}\\
0&\tA_{F_2} & \dots & \tA_{F_2,F_k}\\
\vdots &\vdots & \ddots &  \vdots \\
0&0&\dots & \tA_{F_k} 
\end{pmatrix}, \]
where $\tA_{F_i, F_j}$ is a $|F_i|\times |F_j|$ matrix and  $\tA_{F_i}=\tA_{F_i, F_i}$. Since each $F_1,\dots, F_k$ is strongly connected, $\tA_{F_i}$ is an irreducible matrix for every $1\leq i\leq k$. Now

\[I- (a+b-1) \tA_F = \begin{pmatrix}
I_1- (a+b-1)\tA_{F_1} & \tA_{F_1,F_2} & \dots & \tA_{F_1,F_k}\\
0&I_2- (a+b-1)\tA_{F_2} & \dots & \tA_{F_2,F_k}\\
\vdots &\vdots & \ddots &  \vdots \\
0&0&\dots & I_k- (a+b-1)\tA_{F_k} 
\end{pmatrix}, \]
where $I_j$ is the $|F_j| \times |F_j|$ identity matrix. 
By Schur's complement,  we know that $I- (a+b-1) \tA_F$ is invertible  whenever each block matrix $ I_1-(a+b-1)\tA_{F_1}, \dots, I_k-(a+b-1)\tA_{F_k}$ is invertible. Since every  $\tA_{F_j}$ is an irreducible and sub-stochastic matrix (sub-stochasticity follows from the hypothesis in part (ii) of the theorem), therefore  $-1<\lambda_{\min}(\tA_{F_j})\leq \lambda_{\max}( \tA_{F_j})<1$ for every $j$. Hence $ I_j-(a+b-1)\tA_{F_j}$ is invertible for every $1\leq j\leq k$. 
\end{enumerate}
Finally,  using the above arguments we can also conclude that all the eigenvalues of the Jacobian matrix $\frac{\partial h}{\partial z} = (a+b-1)\tA_F-I$ are negative in both the cases. Therefore $z^*$ is uniformly stable and this concludes the proof.
\end{proof}

\begin{proof}[Proof of Corollary \ref{Cor1}]
Under the assumptions, the unique solution of \eqref{Equ:ODEequil}  is $z^*=c\bone $, for 
\[c =\frac{(1-b) +(a+b-1)c_1}{1- (a+b-1)c_2}.\]
For non-P\'olya type reinforcement we have $0< a+b \neq 2$  and  for $S=\emptyset$ we get $c_1=0$ and $c_2 =1$. Thus the unique equilibrium point $z^*$ simplifies to $z^* = \dfrac{1-b}{2-a-b} \bone.$
\end{proof}
We now prove the scaling limit theorems using tools and results from \cite{Zhang2016}. 
Observe that for $\rho >0$ ( where $\rho$ is as defined in \eqref{Def:rho}), the assumptions 2.2 and 2.3 made in  \cite{Zhang2016} are satisfied.  That is  for some  $\delta >0$ 
\[ h(z)= h(z^*) + (z-z^*) Dh(z^*) + o\left( \|z-z^*\|^{1+\delta} \right) \]
and for every $\epsilon >0$
\[ \frac{1}{t} \IE\left[\| (\Delta M^{t+1}\tA)_F  \|^2 \mathbb{I}\{\| (\Delta M^{t+1}\tA)_F  \|\geq \epsilon \sqrt{t}\} | \FF_t \right] \xrightarrow{a.s.}   0.\]
Further, as we shall see in the proofs below that $ \lim_{t \to \infty} \IE\left[\left((\Delta M^{t+1} \tA)_F  \right)^\top \left((\Delta M^{t+1} \tA)_F \right) \Big\vert \FF_t\right ]$ exists. 

\

As mentioned before, the scaling for the limit theorems is given by the regimes of $\rho$, where $\rho$ is as defined in \eqref{Def:rho}. We remark that $\rho$ depends on the eigenvalues of $\tA_F$ as well as on $a+b-1$, which is in fact the non-principal eigenvalue of the scaled reinforcement matrix $\frac{1}{m} R$. In particular,   we have the following two cases  
\begin{enumerate} 
\item When $\mathbf{a+b-1 > 0}$,  we get  $\rho = 1-(a+b-1) \lambda_{\max}(\tA_F).$
Therefore
\[ \rho>1/2 \iff \lambda_{\max}(\tA_F)  \in \left[-1,  \frac{1}{2(a+b-1)}\right)  \qquad \text{and} \qquad  \rho=1/2 \iff \lambda_{\max}(\tA_F) = \frac{1}{2(a+b-1)}.  \]

\item When $\mathbf{a+b-1 <0}$,  we get $\rho = 1-(a+b-1) \lambda_{\min}(\tA_F).$
 Therefore 
\[ \rho>1/2 \iff  \lambda_{\min}(\tA_F) \in \left(\frac{1}{2(a+b-1)} , 1\right] \qquad \text{and} \qquad \rho=1/2 \iff \lambda_{\min}(\tA_F) = \frac{1}{2(a+b-1)}.  \]
\end{enumerate}
We now give the proof of the limit theorems.

\begin{proof}[Proof of Theorem \ref{Thm: CLT-rho> 1/2}] 
The  limit theorem in case of $\rho>1/2$ holds with scaling $\sqrt{t}$ (see Theorem 2.3  \cite{Zhang2016}), 
with the limiting variance matrix given by
\begin{equation*}\label{Def:Sigma1}
\Sigma=\int_0^\infty \left(e^{-\left (H - 1/2I \right)u} \right)^\top\Gamma \, e^{-\left (H - 1/2I \right)u}  du, 
	\end{equation*}	
where
\[ \Gamma  = \lim_{t \to \infty} \IE\left[(\Delta M^{t+1} \tA)_F   ^\top \left((\Delta M^{t+1} \tA)_F  \right) \vert \FF_t\right ].\]
Note that, 
\[\left((\Delta M^{t+1} \tA)_F   \right)^\top \left((\Delta M^{t+1} \tA)_F  \right)  =  \left(  \tA^\top  \left(\Delta M^{t+1}\right)^\top \Delta M^{t+1} \tA\right)_F.\]
Now using the fact that for $i\neq j$, $Y_i^t$ and $Y_j^t$ are conditionally independent, we get
\[ \Gamma  =(a+b-1)^2 \, \left( \tA^\top  \lim_{t \to \infty}  \IE\left[( Y^{t+1}- Z^t)^\top (Y^{t+1} -Z^t)\vert \FF_t\right ]\tA\right) _F =  (\tA^\top \Theta \tA)_F \]
where $\Theta$ is a  diagonal matrix given by
\[\Theta_{i,i} =\begin{cases}
(a+b-1)^2 z_i^* (1-z_i^*) & \text{ for } i \in F,\\
(a+b-1)^2 Z_i^0(1-Z_i^0) & \text{ for } i \in S.
\end{cases}  \]
Therefore we get 
\[\Sigma  = \int_0^\infty \left(e^{-\left (H - \frac{1}{2} I \right)u} \right)^\top(\tA^\top \Theta \tA)_F\,  e^{-\left (H - \frac{1}{2} I \right)u} du.  \]
\end{proof}


\begin{proof}[Proof of Corollary \ref{Cor:Sigma-rho>1/2}]
For $\tA = \tA^\top$, we have  $S =\emptyset$  and $\tA_F = \tA$.  Therefore for non-P\'olya type reinforcement by Corollary \ref{Cor1}, $ z^* =\frac{1-b}{2-a-b} \bone$, thus $\Theta= C(a,b)I$ and we get 
\[  \Sigma  = C(a,b)\int_0^\infty \left( e^{-\left (H - \frac{1}{2} I \right)u} \right)^\top \tA^\top\tA \, e^{-\left (H - \frac{1}{2} I \right)u} du, \]
where $C(a,b) = \dfrac{ (a+b-1)^2(1-a)(1-b)}{(2-a-b)^2}.$
Further, $\tA$ has a spectral decomposition 
\begin{equation}
 \tA = P \Lambda P^{-1} \qquad  \text{ with } \qquad \Lambda = \begin{pmatrix} 
1 & 0& \cdots &0 \\
0&\lambda_1 & \cdots &0\\
\vdots & \vdots & \ddots &\vdots \\
0&0 &\cdots & \lambda_{N-1} \end{pmatrix},
\label{Equ:A-SpecDecomp}
\end{equation}
where $P$ is a $N \times N$ real orthogonal matrix and  $1, \lambda_1, \dots, \lambda_{N-1}$ are  eigenvalues of $\tA$. 
Since  $H$ commutes with $\tA$, the limiting variance matrix is given by 
\begin{align*}
\Sigma & = C(a,b)\; \tA^2 \int_0^\infty e^{-\left (2H - I \right)u} du \\
& = C(a,b)\; \tA^2 \int_0^\infty e^{-\left (I - 2(a+b-1)\tA \right)u} du \\
& =  C(a,b)\; \tA^2 P \left (\int_0^\infty e^{-\left (I - 2(a+b-1)\Lambda \right)u} du \right) P^{-1} \\
& =  C(a,b)\; \tA^2 \left(I- 2(a+b-1)\tA \right )^{-1}.
\end{align*}
This completes the proof.
\end{proof}


\begin{proof}[Proof of Theorem \ref{Thm: CLT-rho=1/2}]
In the case when $\rho =1/2$, the asymptotic normality holds with scaling $\sqrt{\dfrac{t}{\log t}}$ \ (see Theorem 2.1 \cite{Zhang2016}) and the limiting variance matrix given by
\[ \tilde \Sigma=\lim_{t \to \infty} \frac{1}{\log t}\int_0^{\log t} \left( e^{-\left (H - 1/2I \right)u} \right)^\top\Gamma e^{-\left (H - 1/2I \right)u} du, \]
where $\Gamma = (\tA^\top \Theta \tA)_F $
\end{proof}

\begin{proof}[Proof of Corollary \ref{Cor:Sigma-rho=1/2}]
For $\tA = \tA^\top$, as in the proof of Corollary \ref{Cor:Sigma-rho>1/2}, we get that $\Theta = C(a,b)I$  and $\tA_F =\tA$ is a symmetric matrix. Therefore in the case of $\rho=1/2$,  using the spectral decomposition as in  \eqref{Equ:A-SpecDecomp} the limiting variance matrix is given by
\begin{align*}
\tilde \Sigma & =  C(a,b)\; \tA^2 \lim_{t \to \infty} \frac{1}{\log t} \int_0^{\log t} e^{-\left (2H - I \right)u} du \\
& = C(a,b)\; \tA^2 \lim_{t \to \infty}  \frac{1}{\log t} \int_0^{\log t} e^{-\left (I - 2(a+b-1)\tA \right)u} du \\
& =  C(a,b)\; \tA^2 P \lim_{t \to \infty}  \frac{1}{\log t} \left (\int_0^{\log t} e^{-\left (I - 2(a+b-1)\Lambda \right)u} du \right) P^{-1},
\end{align*}
where
\[ e^{-(I - 2(a+b-1)\Lambda)u} = \begin{pmatrix} 
1 & 0& \cdots &0 \\
0&e^{-(1-2(a+b-1)\lambda_1)u} & \cdots &0\\
\vdots & \vdots & \ddots &\vdots \\
0&0 &\cdots & e^{-(1-2(a+b-1)\lambda_{N-1})u} \end{pmatrix}. \]
Since for $\rho =1/2$, $ 1-2(a+b-1)\lambda_i  >0$ for every  $i=1,\dots, N-1$,  we get 
\[\int_0^{\log t} e^{-(1-2(a+b-1)\lambda_i)u} du = \frac{1 - t^{-1+2(a+b-1) \lambda_i }}{1-2(a+b-1)\lambda_i}. \]
Thus, 
\[ \lim_{t \to \infty}  \frac{1}{\log t} \int_0^{\log t} e^{-(I-2(a+b-1)\Lambda)u} du = \begin{pmatrix} 
1 & 0& \cdots &0 \\
0& 0 & \cdots &0\\
\vdots & \vdots & \ddots &\vdots \\
0&0 &\cdots & 0 \end{pmatrix} \]
and we get 
\[\tilde{\Sigma} =  C(a,b)\; \tA^2 P \begin{pmatrix} 
1 & 0& \cdots &0 \\
0& 0 & \cdots &0\\
\vdots & \vdots & \ddots &\vdots \\
0&0 &\cdots & 0 \end{pmatrix} P^{-1}.\]
Since the normalized eigenvector corresponding to the maximal eigenvalue $1$ of $\tA$ is $\frac{1}{\sqrt{N}}\bone$ we get 
\[\tilde{\Sigma} =  C(a,b)\; \tA^2 \left (\frac{1}{N} J\right) = \frac{C(a,b)}{N}J, \]
where $J= \bone ^\top \bone $, is a $N \times N$ matrix with all elements equal to $1$.
\end{proof}
We now prove the results for P\'olya type reinforcement stated in Section \ref{Sec:Results-polya}. Our main aim  (as stated in Theorem \ref{Thm: SLLN-Polya-regular}) is to show that the process $\{Z_i^t\}_{t\geq 0}$ converges almost surely to a finite random variable $Z^\infty$ for every $i\in [N]$. In order to show this, we first prove that $Z^t-\bar{Z}^t$ converges almost surely to $\bz$, using the convergence result for almost supermartingales \cite{Robbins1971}. Then in the proofs of Lemma \ref{Quasi-Martingale} and  Theorem \ref{Thm: SLLN-Polya-regular}, we show that both $Z^t$ and $\bar{Z}^t$ admit an almost sure limit, using convergence theorems for quasi-martingales \cite{Michel1982} and martingales respectively.\begin{proof}[Proof of Theorem \ref{Thm:Polya-regular-ConvgRate}] 
Recall that under the assumption {\bf (A2)} of the theorem,  $ \bT^t = (T^0 + tmd) I = T^t I$. Define $\Phi_i^t = Z_i^t-\bar{Z}^t$, then in the vector notation we have
\begin{equation}\label{Defn:xt}
\Phi^t = Z^t - Z^t \frac{1}{N} J \eqqcolon  Z^t K,
\end{equation}
where $\Phi^t=\left(\Phi^t_1, \ldots, \Phi^t_N\right )$, $K =  I-\dfrac{1}{N}J $, and  $J$ is the $N\times N$ matrix of all entries equal to $1$. We want to show that the order of $\Var(\Phi^t) = \Var(Z^{t}- \bar{Z}^t \bone) $ is as given in equation \eqref{Var:phi} and $\Phi^t  \longrightarrow 0 $ almost surely.  To this end, we write a  recursion for $\Var \left(\Phi^{t+1}\right) $ using the law of total variance given by
\begin{equation} \label{total:var}
\Var \left(\Phi^{t+1}\right)  =   \Var \left(\IE\left[\Phi^{t+1}\Big\vert\FF_t \right] \right) + \IE \left[\Var \left(\Phi^{t+1}\Big\vert\FF_t \right) \right].  
\end{equation}
From equation \eqref{Recur:Wt} we get
\begin{align}
 \IE \left[Z^{t+1}\Big\vert \FF_t \right]
&= \frac{1}{T^{t+1}}  \IE\left[W^{t+1}\Big\vert\FF_t \right] = \frac{1}{T^{t+1}}    \left(W^{t} + m Z^tA \right) \eqqcolon Z^tU_t,
\label{Equ:Mean-Zt}
\end{align} 
where $U_t =  \dfrac{1}{T^{t+1}} (T^t I +m A )$. This gives 
 \begin{align}
\IE \left[\Phi^{t+1}\Big\vert\FF_t \right]
 &= Z^{t}U_t K  = \Phi^t U_t, \label{Exp-phi}
\end{align}
and
\begin{equation}\label{Eqn:Var-Exp-1}
\Var \left(\mathrm{E}\left[\Phi^{t+1}\Big\vert\FF_t \right] \right)  = U_t^\top \Var(\Phi^{t})U_t.
\end{equation}
For the second term of the expression for  $\Var \left(\Phi^{t+1}\right)$ in equation \eqref{total:var}, we have 
\begin{align*}
\Var\left(\Phi^{t+1}\Big\vert\FF_t \right)
&= K\Var\left(Z^{t+1}\Big\vert\FF_t \right) K \\ 
&= \frac{m^2}{(T^{t+1})^2} K  A^\top \, \Var\left(Y^{t+1}\Big\vert\FF_t \right)A K\\
&= \frac{m^2}{(T^{t+1})^2}K   A^\top \, \mathrm{Diag}\left(Z_i^{t} (1-Z_i^t)\right)_{1\leq i\leq N}    A K,
\end{align*}
where $\mathrm{Diag}\left( x_i \right)_{1\leq i\leq N} $ denotes the  $N\times N$ diagonal matrix with $i^{\mathrm{th}}$ diagonal element equal to $x_i$. 
Thus
\begin{equation}\label{Eqn:Exp-Var-2}
\IE \left[\Var \left(\Phi^{t+1}\Big\vert\FF_t \right) \right] 
 = \frac{m^2}{(T^{t+1})^2} K A^\top  V_z^t  AK,
\end{equation}
where $V_z^t = \mathrm{Diag}\big(\IE[Z_i^{t} (1-Z_i^t)]\,\big)_{1\leq i\leq N} $.
Substituting the quantities  from equations \eqref{Eqn:Var-Exp-1} and \eqref{Eqn:Exp-Var-2} in equation \eqref{total:var}, we get
\[ \Var \left(\Phi^{t+1}\right)   = U_t^\top \Var(\Phi^{t}) U_t+  \frac{m^2}{(T^{t+1})^2}  KA^\top V_z^t AK. \]
Iterating this we get
\begin{align} 
\Var \left(\Phi^{t+1}\right) 
&=  \left(\prod_{k=0}^{t-1} U_{t-k}^\top \right)  \Var \left(\Phi^{1}\right) \left(\prod_{k=1}^t U_k\right)  +  \sum_{j=1}^t  \frac{m^2}{(T^{j+1})^2}  \left(\prod_{k=0}^{t-j-1} U_{t-k}^\top \right)  K A^\top V_z^ j A K  \left(\prod_{k=j+1}^t U_k \right) \nonumber \\
&= \sum_{j=0}^t  \frac{m^2}{(T^{j+1})^2}  \left(\prod_{k=0}^{t-j-1} U_{t-k}^\top \right) KA^\top V_z^ j A K   \left(\prod_{k=j+1}^t U_k \right),\label{Var-1}
\end{align}
where the last equality follows from the fact that $\Var \left(\Phi^1\right)  = \frac{m^2}{(T^1)^2}KA^\top V_z^0 AK$.
We can write $U_t =  I+  \frac{md}{T^{t+1}}  \left (\tA-I\right ),$
where $\tA = \frac{1}{d} A$ is the scaled adjacency matrix. Using assumption {\bf (A1)},  we get  that there exists a matrix $P$ such that 
\[\tA = P \begin{pmatrix}
1 & 0 & \cdots & 0\\
0&\lambda_1 & \cdots & 0\\
\vdots&\vdots&\ddots&\vdots \\
0& 0 & \cdots & \lambda_{N-1}\\
\end{pmatrix} P^{-1} \qquad \text{and}  \qquad K= P \begin{pmatrix}
0 & 0 & \cdots & 0\\
0&1 & \cdots & 0\\
\vdots&\vdots&\ddots&\vdots \\
0& 0 & \cdots & 1\\
\end{pmatrix} P^{-1},\]
where $1, \lambda_1,\cdots, \lambda_{N-1}$ are the $N$ eigenvalues of $\tA$. Thus for $j \geq 0.$
\begin{equation}\label{Prod:U}
K \bigg(\prod_{k=j+1}^t U_k \bigg)  =P \prod_{k=j+1}^t \left ( I+ \frac{md}{T^0 +md(k+1) }   \left (\Lambda-I\right ) \right )P^{-1}.
\end{equation}
Let  $S \coloneqq  P^\top A^\top V_z^ j AP$, then from \eqref{Var-1} and \eqref{Prod:U} we get
\begin{align*}
&\Var \left(\Phi^{t+1}\right) \\
& =  (P^{-1})^\top \left( \sum_{j=0}^t  \frac{m^2}{(T^{j+1})^2}  \prod_{k=j+1}^t \left ( I+ \frac{md}{T^0 +md(k+1) }   \left (\Lambda-I\right ) \right ) S \prod_{k=j+1}^t \left ( I+ \frac{md}{T^0 +md(k+1) }   \left (\Lambda-I\right ) \right )\right) P^{-1} \\
  & = (P^{-1})^\top   \Omega P^{-1}.
\end{align*}
 where $\Omega  = \sum_{j=0}^t  \frac{m^2}{(T^{j+1})^2}  \prod_{k=j+1}^t \left ( I+ \frac{md}{T^0 +md(k+1) }   \left (\Lambda-I\right ) \right ) S \prod_{k=j+1}^t \left ( I+ \frac{md}{T^0 +md(k+1) }   \left (\Lambda-I\right ) \right )$.  The $(l,m)$-th element of $\Omega$ is 
\begin{align*}
\Omega _{l,m}  = \sum_{j=0}^t  \frac{m^2}{(T^{j+1})^2}   S_{l,m} \prod_{k=j+1}^t  \left (1+ \frac{\lambda_l -1}{(T^0/md) +k+1}\right ) \prod_{k=j+1}^t  \left (1+ \frac{\lambda_m -1}{(T^0/md) +k+1}\right ).
\end{align*}
Using Euler's approximation, for every $l =1, \dots N-1$, we get  $\prod_{k=j+1}^t  \left (1+ \frac{\lambda_l -1}{(T^0/md) +k+1}\right ) =\OO\left( (t/j)^{\lambda_l-1} \right)$. 
Since  the elements of matrix  $S$ are bounded,  we get 
\[ \Omega_{l,m}=  \OO\left( \sum_{j=0}^t  \frac{1}{j^2}   \left( \frac{t}{j}\right)^{\lambda_l+\lambda_m -2} \right) = \OO\left( \sum_{j=0}^t  \left( \frac{t}{j}\right)^{\lambda_l+\lambda_m } \right). \]
Hence
\[\Omega = \begin{cases} 
\OO\left(t^{-1}\right) & \text{ if } \lambda_{\min}(\tA) <1/2, \\ 
\OO\left (\frac{\log(t)}{t}\right ) & \text{ if } \lambda_{\min}(\tA) = 1/2,\\ 
\OO\left ( t^{2\Re( \lambda_{(2)}) -2}\right ) & \text{ if } \lambda_{\min}(\tA)>1/2.
\end{cases}\]
Since $P$ and $P^{-1}$ have bounded elements, the order for $ \Var \left(\Phi^{t+1}\right)$ is same as the order of $\Omega$.
This proves the first part of the theorem. Now from equation \eqref{Exp-phi} we get 
\[ \IE[\Phi^t] = \Phi^0 \prod_{j=1}^t U_j 
 =  Z^0 P \begin{pmatrix}
0 & 0 & \cdots & 0\\
0&\OO\left(t^{\lambda_1-1} \right)& \cdots & 0\\
\vdots&\vdots&\ddots&\vdots \\
0& 0 & \cdots & \OO\left( t^{\lambda_{N-1}-1}\right)
\end{pmatrix} P^{-1} \longrightarrow 0, \text { as } t \to \infty.\]
This implies  $\IE[ \|\Phi^t\|^2]  = \IE[ \Phi^t (\Phi^t)^\top] = \sum_{i=1}^N  \Var(\Phi^t_i) + \IE[\Phi^t] (\IE[\Phi^t])^\top \to 0$. Therefore, to show that $\Phi^t$ converges almost surely to $0$,  it is enough to show that $\|\Phi^t\|^2 $ admits an almost sure limit.  
 \begin{align*}
 \Phi^{t+1}  &= \IE \left[\Phi^{t+1}\Big\vert\FF_t \right]+   \Phi^{t+1} - \IE \left[\Phi^{t+1}\Big\vert\FF_t \right]\\
&= \IE \left[\Phi^{t+1}\Big\vert\FF_t \right]+   \frac{1}{(T^{t+1})^2}  \Delta M_\Phi^{t+1},
 \end{align*}
 where $\Delta M_\Phi^{t+1}=  \left(Y^{t+1} - \IE \left[Y^{t+1}\Big\vert\FF_t \right]\right) AK$ is a martingale difference sequence. Therefore we have 
 \begin{align}
\IE\left[ \|\Phi^{t+1}\|^2\Big\vert\FF_t \right] 
&= \IE \left[\Phi^{t+1} (\Phi^{t+1})^\top\Big\vert\FF_t  \right] \nonumber \\ 
& = \IE \left[\Phi^{t+1}\Big\vert\FF_t \right]  \IE \left[\Phi^{t+1}\Big\vert\FF_t \right] ^\top  +  \frac{1}{(T^{t+1})^2}  \IE \left[\Delta M_\Phi^{t+1} (\Delta M_\Phi^{t+1})^\top \Big\vert\FF_t \right] \nonumber \\ 
& = \Phi^{t} U_t U_t^\top  (\Phi^{t})^\top  +  \frac{1}{(T^{t+1})^2}  \IE \left[\Delta M_\Phi^{t+1} (\Delta M_\Phi^{t+1})^\top\Big\vert\FF_t \right] \nonumber \\ 
& = \Phi^{t}  \left(I-  \frac{md}{T^{t+1}}  \left (I- \tA \right ) \right)\left(I-  \frac{md}{T^{t+1}}  \left (I- \tA^\top  \right ) \right) (\Phi^{t})^\top  +  \frac{1}{(T^{t+1})^2}  \IE \left[\Delta M_\Phi^{t+1} (\Delta M_\Phi^{t+1})^\top\Big\vert\FF_t \right] \nonumber \\ 
& = \|\Phi^{t}\|^2 -  \frac{md}{T^{t+1}}   \Phi^{t} (2I-\tA-\tA^\top )   (\Phi^{t})^\top   +   \frac{m^2d^2}{(T^{t+1})^2}   \Phi^{t}  (I-\tA)(I-\tA^\top)     (\Phi^{t})^\top   \nonumber \\
&\quad +   \frac{1}{(T^{t+1})^2}   \IE \left[\Delta M_\Phi^{t+1} (\Delta M_\Phi^{t+1})^\top\Big\vert\FF_t \right]  \nonumber \\ 
& \leq  \|\Phi^{t}\|^2  +   \frac{\eta_t}{(T^{t+1})^2} , \label{norm:Phi}
 \end{align}
 where $ \eta_t = m^2d^2  \Phi^{t}  (I-\tA)(I-\tA^\top)   (\Phi^{t})^\top   +   \IE \left[\Delta M_\Phi^{t+1} (\Delta M_\Phi^{t+1})^\top\Big\vert\FF_t \right]  $ and  the last inequality follows from the fact that $ 2I-\tA-\tA^\top$ is a positive semi-definite matrix. Since $\eta_t$ is a bounded random variable, from equation \eqref{norm:Phi} we get that $\|\Phi^t\|^2$ is an almost super martingale, therefore  $\|\Phi^t\|^2 $ converges almost surely (see Theorem 1 \cite{Robbins1971}). This concludes the proof of the theorem. 
\end{proof}

\begin{proof}[Proof of Corollary~\ref{Cor:Polya-regular-ConvgRate}]
The proof follows from the same argument as above by taking $\Psi^t = Z^t \left (\tilde{A} - \frac{1}{N} J\right )$. \end{proof}


Recall that Theorem~\ref{Thm: SLLN-Polya-regular} says that for every $i \in V$, $Z_i^t$ converges to the same random limit. To prove this, we use the fact that $Z_i^t- \bar{Z}^t$ converges to $0$  almost surely (Theorem~\ref{Thm:Polya-regular-ConvgRate} proved above) and we show that both $Z^t$ and $\bar Z^t$ admit an almost sure limit.
From equation \eqref{Equ:Mean-Zt}, we have 
\[ \IE[Z^{t+1} | \FF_t] = Z^t \bigg( \frac{T^tI+ md\tA}{T^{t+1}}\bigg) = Z^t \bigg(I + \frac{md(\tA-I)}{T^{t+1}}\bigg).\]
Therefore, the $N$- dimensional process $\{Z^{t}\}_{t\geq 0}$ is a martingale if and only if $\tilde{A} = I$, that is when each node is isolated and has a self-loop. While $\{Z^{t}\}_{t\geq 0}$ is not a martingale in general, in the next lemma we show that $Z^t$ admits an almost sure limit. 

\begin{lemma} \label{Quasi-Martingale}
Under assumptions {\bf (A1)} and {\bf (A2)}  $Z^t$ admits an almost sure limit.
\end{lemma}
\begin{proof}
Using Theorem 11.7 in \cite{Michel1982},  it is enough to show that  process $(Z^t)_{t\geq 0}$ satisfies the following two conditions
\begin{enumerate}[(i)]
\item $\sum_{t=0}^\infty \IE\left [\| \IE[Z^{t+1} | \FF_t]-Z_t \|\right ]  <\infty$.
\item  $\sup_{t\geq 0}  \IE [\|Z^t\|] <\infty.$
\end{enumerate}
 Note that, $Z^t$ satisfies (ii) trivially. Let $\Phi_t$ and $\Psi_t$ be as defined in the proofs of Theorem~\ref{Thm:Polya-regular-ConvgRate} and Corollary~\ref{Cor:Polya-regular-ConvgRate} respectively.  
Now,
\begin{align}
\sum_{t=0}^\infty \IE\left [\| \IE[Z^{t+1} | \FF_t]-Z_t \|\right ] 
 =&\sum_{t=0}^\infty \frac{md}{T^{t+1}} \IE\left [\|Z^t(\tA-I) \|\right ] \nonumber \\
 = &\sum_{t=0}^\infty \frac{md}{T^{t+1}}  \IE\left [\big\|Z^t\left (\tA-\frac{1}{N}J\right ) + Z^t \left (\frac{1}{N}J - I\right ) \big\|\right ] \nonumber \\
 = &\sum_{t=0}^\infty \frac{md}{T^{t+1}}  \IE\left [\|\Psi^t - \Phi^t \|\right ] \nonumber \\
 \leq &\sum_{t=0}^\infty \frac{md}{T^{t+1}}    \bigg (\IE\left [\|\Psi^t -\IE[\Psi^t] \| \right ]+ \IE\left [\|\Phi^t - \IE[\Phi^t]\|\right ] + \IE\left [\|\IE[\Psi^t - \Phi^t]\|\right ] \bigg )\nonumber \\
 \leq & \sum_{t=0}^\infty \frac{md}{T^{t+1}}  \left (\sqrt{ \sum_{i=1}^N \Var(\Psi_i^t)} +\sqrt{ \sum_{i=1}^N \Var(\Phi_i^t)}+ \IE\left [\| \IE[\Psi^t - \Phi^t] \|\right ]\right ), \label{QuasiMar:Bound} 
\end{align} 
where the last inequality follows by Jensen's inequality. The fact that first two terms of \eqref{QuasiMar:Bound} are finite follows from Theorem \ref{Thm:Polya-regular-ConvgRate} and Corollary~\ref{Cor:Polya-regular-ConvgRate}. We will now show that the last term in \eqref{QuasiMar:Bound} is also finite.  We have
\[ \IE[\Phi^{t+1}]= \Phi^0 \prod_{j=1}^{t+1} \left(I + \frac{m}{T^j}   (A-dI )  \right) \qquad \text{and}\qquad  \IE[\Psi^{t+1}]= \Psi^0 \prod_{j=1}^{t+1} \left(I + \frac{m}{T^j} (A-dI )  \right). \]
Using assumption {\bf (A1)} and {\bf (A2)}  we get
\begin{eqnarray*}
\IE[\Psi^t - \Phi^t] & = & (\Psi^0-\Phi^0)\prod_{j=1}^t \left(I +\frac{m}{T^j}  (A-dI ) \right) \\
& = & Z^0(\tA - I )\prod_{j=1}^t \left(I +\frac{md}{T^j}  (\tA-I ) \right)\\
&=&Z^0 P(\Lambda -I) \prod_{j=1}^t \left(I + \frac{md}{T^j}  ( \Lambda -I ) \right) P^{-1} \\
& = &Z^0  P Q^t P^{-1},
\end{eqnarray*}
where $Q^t$ is a $N \times N$ diagonal matrix with $Q^t(i,i) = (\lambda_i-1) \prod_{j=1}^t \left ( 1+\frac{md(\lambda_{i}-1)}{T^j} \right ) = \OO(t^{\lambda_i-1})$ and $Q^t(1,1)= 0$. 
This implies $ \| \IE[\Psi^t - \Phi^t]\| =  \OO\left (t^{\Re(\lambda_{(2)})-1}\right )$.  Thus the sum in \eqref{QuasiMar:Bound} is finite. This completes the proof.
\end{proof}

\begin{proof}[Proof of Theorem \ref{Thm: SLLN-Polya-regular}]
From equation \eqref{Equ:Mean-Zt} we have 
\begin{align*}
\IE[\bar Z^{t+1} \vert\FF_t ] & =\frac{1}{N}\IE\left[Z^{t+1} \bone^\top \Big\vert\FF_t \right]\\
&= \frac{1}{N}Z^t \left (I+\frac{md}{T^{t+1}} (\tA-I) \right ) \bone^\top \\
&= \frac{1}{N}Z^t\bone^\top  = \bar Z^t,
\end{align*}
since $\tA \bone^\top =\bone^\top$ for a regular graph.
Thus, $\bar{Z}^t$ is a bounded martingale, therefore by the Martingale Convergence Theorem, there exists a finite random variable $Z^\infty$ such that $\bar{Z}^t \to Z^\infty $ almost surely. Using Theorem~\ref{Thm:Polya-regular-ConvgRate} and Lemma~\ref{Quasi-Martingale} we conclude that $Z^t \to Z^\infty \bone $ almost surely.
\end{proof}

Observe that for P\'olya type reinforcement, convergence and synchronisation results are obtained only for regular graphs. This restriction was needed to obtain explicit expressions by using the symmetry and hence the spectral decomposition of the adjacency matrix. We believe that the above results  can be extended to more general graphs using similar ideas.



\section{Application to opinion dynamics on networks}\label{Sec:Application}
We briefly demonstrate how these models can be used for modelling opinion dynamics on networks. We consider urns at vertices of a directed graph and assume that urns represent individuals such that proportion of white balls (respectively black balls) in an urn quantifies positive (respectively negative) inclination of that individual on a fixed subject. The graph structure determines the interactions between the individuals. That is, if there is an edge from node $i$ to node $j$, individual/urn at node $i$ can influence individual/urn at $j$ via a chosen (but fixed) reinforcement matrix. We define the opinion of an individual $i$ at time $t$ by $O^t_i = \mbox{Sign }(Z^t_i-1/2)$, where for convenience we assume $\mbox{Sign} (0) = 1$. The asymptotic results for the urn process obtained in this paper can be used to study the evolution of opinions defined this way. Note that, this process of evolution of opinions is very different from the traditional voter model or its extensions. In our model, the opinion of an individual does not flip frequently and evolves slowly, as the inclinations change depending on neighbourhood interactions. For instance, at time $t$ consider an individual $i$ with opinion $0$ (and $Z_i^t << 1/2$) such that all of her in-neighbours have opinion $1$. After the reinforcement at time $t+1$, it is quite probable that the positive inclination $Z_i^{t+1}$ of the individual at $i$ gets closer to $1/2$ but the opinion $O_i^{t+1}$ may still remain $0$. Thus, the opinion evolution model based on the interacting urn process on a network discussed in this paper models a more realistic behaviour of people.

The convergence results for the urn process could be used to answer some interesting questions about opinion evolution on a network. Given the reinforcement matrix, we can determine whether the limiting opinion of the majority is $1$ or $0$. Put differently, we can find conditions on reinforcement matrix such that the limiting opinion of the majority is $1$ or $0$. To illustrate this we consider the following example. 

\begin{example} Consider a directed star-graph $G=(V, E)$ on $N$ vertices such that the center vertex is labelled $1$ and rest of the vertices are labelled $\{2, 3, \ldots, N \}$. Then, $1$ is the only flexible  vertex with in-degree $N-1$  and $S= \{2, 3, \ldots, N \}$. Suppose the reinforcement matrix is 
 $R = \begin{pmatrix} \alpha & m-\alpha \\ m-\beta & \beta \end{pmatrix}$ for $\alpha, \beta \in \{0,1,\dots, m\}$.
Then from Theorem~\ref{Thm:SLLN-NonPolya}  we get that as $t\to \infty$ 
\[Z^t_1 \longrightarrow z^*= (1-b) + (a+b-1) \bar{Z}_{S}^0  \qquad \text{ almost surely,} \]
where $ \bar{Z}_{S}^0 = \frac{1}{N-1} \sum\limits_{j=2}^{N} Z_j^0$.
Thus the asymptotic opinion  of vertex $1$, that is $\mbox{Sign }(z^*-1/2)$ depends on $a, b$ and the average initial inclinations of stubborn individuals. 
\end{example}

\section*{Acknowledgment}
The work of Gursharn Kaur was supported by NUS Research Grant R-155-000-198-114. The work of Neeraja Sahasrabudhe was supported in part by the DST-INSPIRE Faculty Fellowship from the Government of India and in part by CEFIPRA grant No. IFC/DST-Inria-2016-01/448 “Machine Learning for Network Analytics”. We would like to thank Antar Bandyopadhyay for introducing us to this problem and his insightful remarks. We would also like to thank the reviewers for their valuable comments, that led to significant improvement of this work.

\bibliographystyle{abbrv}
\bibliography{ref}

\end{document}